\documentclass[11pt]{article}
\usepackage[margin=1in]{geometry}
\usepackage{amsfonts,amsmath,amssymb}

\PassOptionsToPackage{obeyspaces}{url}
\usepackage[colorlinks=true,citecolor=blue,urlcolor=blue,linkcolor=blue,bookmarksopen=true]{hyperref}
\usepackage{amsthm}
\usepackage{bm}

\usepackage{tikz}
\usetikzlibrary{positioning,shapes,shapes.geometric}

\usepackage{etoolbox}
\patchcmd{\thebibliography}{\leftmargin\labelwidth}{\leftmargin\labelwidth\addtolength\itemsep{-0.1\baselineskip}}{}{}

\usepackage[nameinlink,sort]{cleveref}

\newtheorem{theorem}{Theorem}

\newtheorem{lemma}[theorem]{Lemma}
\crefname{lemma}{lemma}{lemmas}

\newtheorem{corollary}[theorem]{Corollary}
\crefname{corollary}{corollary}{corollaries}

\newtheorem{conj}[theorem]{Conjecture}
\crefname{conj}{conjecture}{conjectures}

\newtheorem{claim}[theorem]{Claim}
\crefname{claim}{claim}{claims}

\newtheorem{prop}[theorem]{Proposition}
\crefname{prop}{proposition}{propositions}

\theoremstyle{definition}

\crefname{defn}{definition}{definitions}

\crefname{remark}{remark}{remarks}

\newtheorem{question}[theorem]{Question}
\crefname{question}{question}{questions}

\crefname{enumi}{part}{parts}

\crefname{equation}{eq.\!}{eqs.\!}

\numberwithin{theorem}{section}

\newcommand*{\eqdef}{\stackrel{\mbox{\normalfont\tiny def}}{=}}   
\newcommand*{\abs}[1]{\lvert{#1}\rvert}                
\newcommand*{\R}{\mathbb{R}}

\title{A discrete view of Gromov's filling area conjecture}

\author{%
    Joseph Briggs\thanks{Department of Mathematics \& Statistics, Auburn University, Auburn, AL. \texttt{\{jgb0059,coc0014\}@auburn.edu}.}
    \and
    Chris Wells\footnotemark[1]
}

\date{}

\begin{document}
\maketitle

\begin{abstract}
    A compact metric surface $M$ \emph{isometrically fills} a closed metric curve $C$ if $\partial M=C$ and $d_M(x,y)=d_C(x,y)$ for every $x,y\in C=\partial M$; that is, $M$ does not introduce any ``shortcuts'' between points on its boundary.
    Gromov's filling area conjecture in differential geometry from 1983 asserts that among all compact, orientable Riemannian surfaces which isometrically fill the Riemannian circle, the one with the smallest surface area is the hemisphere.
    Gromov demonstrated that this is indeed the case if $M$ is homeomorphic to the disk.
    While Gromov's conjecture has since been verified in some other cases, the full conjecture remains unresolved.

    In this paper, we consider a discrete analogue of Gromov's problem, which is likely natural to those who study graph embeddings on arbitrary surfaces.
    Using standard graph-theoretic tools, such as Menger's theorem, we obtain reasonable asymptotic bounds on this discrete variant.
    We then demonstrate how these discrete bounds can be translated to the continuous setting, showing that any isometric filling of the Riemannian circle of length $2\pi$ has surface area at least $1.36\pi$ (the hemisphere has surface area $2\pi$).
    This appears to be the first quantitative lower-bound on Gromov's problem that applies to arbitrary isometric fillings.
\end{abstract}

\section{Gromov's filling area conjecture}

A metric surface $M$ is said to \emph{isometrically fill} a closed, metric curve $C$ if $\partial M=C$ and $d_M(x,y)=d_C(x,y)$ for every $x,y\in C=\partial M$.
In other words, $M$ does not introduce any ``shortcuts'' between points on its boundary.
A natural example arises from when $C=S^1$, the standard circle of circumference $2\pi$, and $M$ is the hemisphere with boundary $S^1$.
It is not hard to convince oneself that $M$ isometrically fills $S^1$.
In contrast, the standard disk $D$ also has $\partial D=S^1$, yet antipodes in $D$ are at distance $2$, as opposed to distance $\pi$, so $D$ does \emph{not} isometrically fill $S^1$.
It seems that the hemisphere is the ``best'' isometric filling of $S^1$.
\begin{conj}[Gromov's filling area conjecture~\cite{gromov_filling}]\label[conj]{gromovConjecture}
    If $M$ is a compact, orientable Riemannian surface which isometrically fills the Riemannian circle of length $2\pi$, then the surface area of $M$ is at least $2\pi$.
\end{conj}
Gromov's conjecture has been verified in the case that $M$ is homeomorphic to a disc~\cite{gromov_filling} and when $M$ has genus 1~\cite{bangert_filling}.

In this paper, we will prove the following quantitative bound related to Gromov's conjecture:
\begin{theorem}\label[theorem]{quantGromov}
    If $M$ is a compact, Riemannian surface which isometrically fills the Riemannian circle of length $2\pi$, then the surface area of $M$ is at least ${\sqrt 3\over 4}\pi^2\approx 1.36\pi$.
\end{theorem}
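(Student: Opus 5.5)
The bound ${\sqrt3\over4}\pi^2$ has a recognizable shape. For a length parameter $t$, the quantity $\frac{\sqrt3}{4}t^2$ is the area of an equilateral triangle of side $t$, so $\frac{\sqrt3}{4}\pi^2$ is the area of an equilateral triangle of side $\pi$. Alternatively it is $\pi\cdot\frac{\sqrt3}{4}\pi$: roughly $\pi$ times an averaged length of geodesics. I would therefore aim for a coarea/integral-geometric argument. The plan is to find a family of pairwise "crossing" paths or bands forced by the filling condition, and to integrate their lengths.

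\textbf{Step 1 (distance functions).} Parametrize $C=\partial M$ by $\theta\in[0,2\pi)$ and write $p_\theta$ for the boundary point at parameter $\theta$. Consider the distance functions $f_\theta(x)=d_M(x,p_\theta)$. Each is $1$-Lipschitz, so $|\nabla f_\theta|\le 1$ almost everywhere.

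\textbf{Step 2 (length bounds from the filling condition).} Take two boundary arcs $A,B$ separated by boundary gaps of lengths $a$ and $b$. I would first try arcs separated by $\pi/3$-type spacings, matching the equilateral shape. Any curve in $M$ separating $A$ from $B$ must have length at least $\min$ of the boundary distances involved. This is the Menger-type duality from the discrete part of the paper: disjoint paths versus cuts. In the continuous setting, the statement is that a short separating curve would produce a shortcut between boundary points.

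\textbf{Step 3 (coarea integration).} Apply the coarea formula with $u=f_{\theta}$, or better with $u=f_{\theta}-f_{\theta+\pi}$, whose gradient is at most $2$. Each level set $\{u=s\}$ separates two boundary arcs, so Step 2 bounds its length from below by some $L(s)$. Then
\[
\mathrm{Area}(M)\ \ge\ \frac12\int |\nabla u|\,dA\ =\ \frac12\int L(s)\,ds .
\]
Choosing the triple of points $\theta,\theta+2\pi/3,\theta+4\pi/3$ and combining the three resulting functions should produce the triangle-shaped integrand, giving $\int L = \frac{\sqrt3}{2}\pi^2$. One could also average over $\theta$.

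\textbf{Main obstacle.} The hard step is the precise separation and length estimate: showing that each level set, as a possibly disconnected $1$-cycle on a surface of arbitrary genus, still has length at least $L(s)$. Level sets need not be single arcs, and handles could let them avoid crossing. I would handle this by showing that some component of the level set must connect the two complementary boundary arcs, since level sets of a function separate its sublevel from its superlevel set regardless of topology. I would then use the filling condition only at the endpoints of that component, on the boundary, which makes the argument genus-independent. Alternatively, the paper's route is to discretize $M$ by a fine triangulation into a graph, apply the discrete Menger bound, and pass to the limit. The limiting step, controlling area by counts of edge-disjoint paths, is where the constant $\sqrt3/4$ from equilateral triangles naturally enters.
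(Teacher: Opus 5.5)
Your coarea framework is a genuinely different route from the paper's. The paper discretizes a PL approximation, extracts about $\delta\lfloor n/2\rfloor$ vertex-disjoint $L$--$R$ paths via \Cref{sperner} and Menger, bounds their total length with \Cref{pathSum}, and converts vertices to area through Euler's formula and near-equilateral triangulations. But as written, your proposal's quantitative core is missing. You never determine $L(s)$. The step where functions based at $\theta,\theta+2\pi/3,\theta+4\pi/3$ are ``combined'' into a triangle-shaped integrand with $\int L=\frac{\sqrt3}{2}\pi^2$ is not an argument: no combination is specified, and the value is read off from the target constant rather than derived. Your justification of the topological step is also a non sequitur. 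That a level set separates $\{u<s\}$ from $\{u>s\}$ does not by itself produce a connected piece of it joining the two boundary points where $u=s$. That is exactly the content of a Sperner/hex-type lemma.

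Both gaps close inside your own framework, and the triple is unnecessary. Take $q$ antipodal to $p$ and $u=f_p-f_q$. The filling condition gives $u(z)=2d_C(z,p)-\pi$ on $\partial M$. So for $\abs{s}<\pi$ the level set $Z_s=u^{-1}(s)$ meets $\partial M$ in exactly two points $z_\pm$, with $d_C(z_+,z_-)=\pi-\abs{s}$.

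To get a continuum in $Z_s$ through $z_+$ and $z_-$, argue by contradiction. Since components and quasi-components of a compact space coincide, you can split $Z_s=Z_1\sqcup Z_2$ into disjoint closed sets with $z_+\in Z_1$ and $z_-\in Z_2$. Now apply \Cref{sperner} on a fine triangulation having $z_\pm$ as vertices, with $S$ the vertices within a small distance of $Z_s$. The resulting $z_+$--$z_-$ path shadows $Z_s$ in steps too small to jump from $Z_1$ to $Z_2$, a contradiction. This is independent of genus and orientability.

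The map $x\mapsto d_M(x,z_+)$ is $1$-Lipschitz and sends such a continuum onto an interval containing $[0,d_M(z_+,z_-)]$. Hence $\mathcal{H}^1(Z_s)\ge d_M(z_+,z_-)=\pi-\abs{s}$. Coarea with $\abs{\nabla u}\le 2$ then gives $\mathrm{Area}(M)\ge\frac12\int_{-\pi}^{\pi}(\pi-\abs{s})\,ds=\frac{\pi^2}{2}>\frac{\sqrt3}{4}\pi^2$. Using $f_p$ alone gives the same bound.

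Completed this way, your route is shorter than the paper's and needs no PL approximation or balanced triangulations. It also proves a stronger constant. The paper's bound is exactly $\frac{\sqrt3}{2}\cdot\frac{\pi^2}{2}$, and the factor $\frac{\sqrt3}{2}$ is lost when vertex counts are converted back into area: vertex-disjoint paths in the triangular lattice can be packed at spacing $\frac{\sqrt3}{2}\epsilon$. What the paper's route buys in exchange is the purely combinatorial \Cref{discreteGromov}.
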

As far as we are aware, this is the first general lower-bound on the surface area that holds for all considered surfaces.
In fact, it does not even require the assumption of orientability.
\medskip

Our proof of \Cref{quantGromov} follows from bounding a combinatorial relaxation of the problem.
An abstract triangulation is a simplicial complex $K$ of dimension $2$ such that every edge is contained in either $1$ or $2$ triangles.
The boundary $\partial K$ is defined to be the subcomplex induced by those edges contained in exactly one triangle.
For vertices $u,v\in V(K)$, the distance $d_K(u,v)$ is defined to be the graph distance from $u$ to $v$ in the $1$-skeleton of $K$.
For a graph $G$, we say that an abstract triangulation $K$ is an isometric filling of $G$ if $\partial K=G$ and $d_K(x,y)=d_G(x,y)$ for all $x,y\in V(G)$.

\begin{question}
    If $K$ is an isometric filling of $C_n$ (the cycle on $n$ vertices), then how large must $V(K)$ be as $n\to\infty$?
\end{question}
We show that any such triangulation must have at least $n^2/8$ many vertices (see \Cref{discreteGromov}), which is tight up to the constant of $1/8$.
We then use an approximate form of this result to deduce \Cref{quantGromov}.
It is unclear whether or not the constant of $1/8$ can be improved, but we do \emph{not} believe that it can be improved to the point of settling Gromov's conjecture (see \Cref{conclusions}).

\section{Discrete setting}

For an abstract triangulation $K$, we denote the vertices, edges and triangles by $V(K)$, $E(K)$ and $T(K)$, respectively.

We extend the notion of isometric fillings to encapsulate an approximate version.
For a graph $G$ and a number $0<\delta\leq 1$, an abstract triangulation $K$ is said to be a \emph{$\delta$-Lipschitz filling of $G$} if $\partial K=G$ and $d_K(x,y)\geq\delta\cdot d_G(x,y)$ for all $x,y\in V(G)$.
Naturally, $1$-Lipschitz fillings are equivalent to isometric fillings.

\begin{theorem}\label[theorem]{discreteGromov}
    Fix $0<\delta\leq 1$. If $K$ is a $\delta$-Lipschitz filling of $C_n$, then $\abs{V(K)}\geq{\delta^3\over 8}(n-1)^2+{1\over 2}(n-1)$.
\end{theorem}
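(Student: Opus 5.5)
The plan is to combine Menger's theorem with a topological duality between vertex cuts and boundary-to-boundary paths, and then count vertices along many disjoint long paths. Write $\partial K=C_n$ with vertices $0,1,\dots,n-1$ in cyclic order. Split the cycle into four consecutive arcs $A,Q,B,S$, where $A$ and $B$ are ``opposite'' arcs and $Q,S$ are the two complementary arcs. Every path in $K$ between a vertex of $A$ and a vertex of $B$ has endpoints $a,b$ with $d_{C_n}(a,b)\ge \min(|Q|,|S|)$. By the $\delta$-Lipschitz hypothesis it therefore has at least $\delta\min(|Q|,|S|)$ edges, and so at least that many interior vertices, up to $\pm1$.

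\emph{Duality step.} The next step is to show that the family of pairwise vertex-disjoint $A$--$B$ paths is large. By Menger's theorem it suffices to lower-bound the size of a vertex set $X$ separating $A$ from $B$ in the $1$-skeleton. The key topological claim is that any such separator contains a walk from $Q$ to $S$ in the $1$-skeleton. To prove it, let $U$ be the union of the closed triangles meeting the component of $A$ in $K-X$. The frontier of $U$ inside $K$ lies in $X$. Because every edge of $K$ lies in one or two triangles, a link/parity argument shows that this frontier is a $1$-chain whose boundary lies on $\partial K$ exactly where $A$-side meets $B$-side, that is, on $Q\cup S$. Taking a component of this frontier gives a path in $K[X]$ from $Q$ to $S$. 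I expect this to be the main obstacle: no planarity is available, so the argument must work for arbitrary (possibly nonorientable, higher-genus) triangulations. I would phrase it with $\mathbb{Z}/2$ chains, which makes orientability irrelevant. Once the claim holds, the $\delta$-Lipschitz condition gives $|X|\ge \delta\, d_{C_n}(Q,S)+1\ge \delta\min(|A|,|B|)+1$. Menger then supplies that many disjoint $A$--$B$ paths.

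\emph{Counting.} Each of the disjoint paths contributes at least about $\delta\min(|Q|,|S|)$ interior vertices. Adding the $n$ boundary vertices, a single choice of arcs $|A|=|B|=\ell$, $|Q|=|S|=n/2-\ell$ already gives roughly $\delta^2\ell(n/2-\ell)\approx \delta^2n^2/16$ vertices. This falls short of the claimed constant $\delta^3/8$. To reach it, I would instead run the argument over a nested family of arc-pairs, for instance $A_k=\{k\}$ and $B_k=\{n-1-k\}$-type pairs, or arcs growing from two antipodal points. The aim is to count, for each distance scale $t$, at least about $\delta t$ vertices that lie at distance roughly $t$ from the boundary and are forced to be distinct. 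One concrete attempt: for each $t$, the set $L_t=\{v: d_K(v,\partial K)=t\}$ must separate appropriate arcs, so the duality step bounds $|L_t|$ from below by $\delta$ times the length of a boundary segment. Summing over $t$ then turns the $\ell(n/2-\ell)$ product into an integral $\int_0^{\delta n/2}(\cdot)\,dt$, which should produce the extra factor and the $1/8$. The linear term $\tfrac12(n-1)$ should come from bookkeeping: the $n$ boundary vertices themselves, and the $\pm1$ in path lengths and separator sizes.

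If the level-set version fails, I would fall back on optimizing the four-arc split. I would also combine the disjoint paths coming from the two splits rotated by $90^\circ$, and try to show their vertex sets overlap little. The expected difficulty in both routes remains the surface-topology duality step, together with ensuring that the vertices counted at different scales are genuinely distinct.
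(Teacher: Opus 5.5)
Your duality step is fine: it is essentially the paper's Sperner-type lemma, which the paper proves by a parity argument on an auxiliary graph of triangles plus the exterior face, after padding $\partial K$ so the separator avoids it. The genuine gap is the counting step. As you note, bounding every Menger path below by the same quantity $\delta\min(\abs{Q},\abs{S})$ gives only about $\delta^2n^2/16$. That is weaker than $\delta^3n^2/8$ once $\delta>1/2$, in particular in the isometric case.

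Neither repair closes the gap. The level-set version fails outright. For $t\geq1$ the set $L_t=\{v:d_K(v,\partial K)=t\}$ is disjoint from $\partial K$, and the cycle $\partial K\subseteq K$ itself joins any two boundary arcs. So $L_t$ separates no pair of boundary arcs, and your duality lemma gives no lower bound on $\abs{L_t}$. The ``rotated splits'' fallback has no mechanism for controlling overlaps.

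The missing idea is to exploit that the Menger paths have \emph{distinct endpoints}, instead of bounding each path uniformly.
\begin{enumerate}
\item Take $x,y$ antipodal, and let $A=L$, $B=R$ be the two full components of $C_n-x-y$. This is the degenerate split $Q=\{x\}$, $S=\{y\}$, where your per-path bound gives only $O(1)$.
\item The separator lemma in $K-x-y$ gives $k\geq\delta\lfloor n/2\rfloor-1$ vertex-disjoint $L$--$R$ paths. They have endpoints $\ell_i\in L$ and $r_i\in R$, and no interior vertex on $C_n$.
\item An individual path may be very short, when $\ell_i,r_i$ are both near $x$ or both near $y$. But a shortest path in $C_n$ from $\ell_i$ to $r_i$ passes through $x$ or $y$.
\item Suppose $s$ of these boundary paths pass through $x$. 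Distinctness of the $\ell_i$ and of the $r_i$ forces the boundary vertex at distance $m\leq s$ from $x$, on either side, to lie on at least $s-m+1$ of them.
\item Combining this with the symmetric count at $y$ gives $\sum_i d_{C_n}(\ell_i,r_i)\geq s^2+(k-s)^2+k\geq k(k+2)/2$.
\item Applying the Lipschitz condition to each path separately yields $\abs{V(K)}\geq n-k+\delta\sum_i d_{C_n}(\ell_i,r_i)\geq n-k+\delta k(k+2)/2$.
\item Choosing $\delta\lfloor n/2\rfloor-1\leq k\leq\delta\lfloor n/2\rfloor$ and simplifying gives the stated bound.
\end{enumerate}
Your heuristic $\sum_{t\leq\delta n/2}\delta t\approx\delta^3n^2/8$ has the right size because this total is quadratic in $k$. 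However, the mechanism producing it is endpoint distinctness, not level sets. Vertex-disjointness of the Menger paths also removes your worry about counting the same vertices at different scales.
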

In the case of $\delta=1$, we thus find that $\abs{V(K)}\geq n^2/8$, as mentioned in the introduction.
The reason for considering $\delta<1$ is that we require an approximate version in order to apply the result to Gromov's conjecture and deduce \Cref{quantGromov}.

We will need two preliminary results in order to establish \Cref{discreteGromov}.

\begin{lemma}\label[lemma]{sperner}
    Let $K$ be an abstract triangulation with $\partial K=C_n$, fix any $x,y\in V(C_n)$ which are not adjacent in $C_n$, and let $L,R$ denote the two connected components of $C_n-x-y$.
    If $S\subseteq V(K)\setminus\{x,y\}$ separates $L,R$ in $K-x-y$, then $S$ contains all internal vertices of a path in the $1$-skeleton of $K$ which connects $x$ and $y$.
\end{lemma}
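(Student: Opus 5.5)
We argue by a Sperner/Hex-type parity argument on the triangles of $K$, which is what the label of the lemma suggests. Let $A$ be the set of vertices of $K-x-y$ that can be reached from $L$ by a path in $K-x-y$ avoiding $S$; this set contains $L$ and, by assumption, is disjoint from $R$. Colour every vertex of $K$ other than $x,y$ as follows: colour $1$ if it lies in $A$ and colour $2$ otherwise. Then every vertex of $L$ has colour $1$ and every vertex of $R$ has colour $2$. By construction, if $u\in A$ and $v\notin A\cup\{x,y\}$ are adjacent, then $v\in S$; if $u\notin S$ this is immediate from the definition of $A$. So every edge of $K-x-y$ joining the two colour classes has its colour-$2$ endpoint in $S$. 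Our goal is to find a path from $x$ to $y$ whose internal vertices all lie in the ``frontier'' $S\cap N(A)$, so that they are in $S$.

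The plan is to take the dual walk through triangles. Call an edge of $K$ a \emph{door} if either it joins a colour-$1$ vertex to a colour-$2$ vertex, or it joins $x$ or $y$ to a colour-$2$ vertex. A more convenient alternative is to give $x$ and $y$ colour $1$ and consider edges $uv$ with $u$ of colour $1$ and $v$ of colour $2$.

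Let $\ell,r$ be the neighbours of $x$ in $C_n$, with $\ell\in L$ and $r\in R$. Then the boundary edge $xr$ is an edge from colour $1$ (namely $x$) to colour $2$. Similarly, at $y$ there is exactly one boundary edge of this type. Every other boundary edge is monochromatic, because the boundary reads $x, L, y, R$ around the cycle. A triangle contains either $0$ or $2$ bichromatic edges. Since each interior edge lies in exactly two triangles, the standard door-to-door walk starting from $xr$ proceeds through triangles, each entered through one bichromatic edge and left through the other. Because $K$ is finite and each edge belongs to at most two triangles, the walk cannot repeat and must terminate at the only other boundary bichromatic edge, namely the one at $y$.

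The colour-$2$ endpoints of the doors met along the walk form a connected set in the $1$-skeleton: consecutive doors share a triangle, and within a triangle the two doors either share their colour-$2$ endpoint or the two colour-$2$ vertices of the triangle are adjacent. Every such colour-$2$ endpoint is adjacent to a colour-$1$ vertex, namely the other end of its door. If that colour-$1$ vertex is in $A$, the colour-$2$ endpoint lies in $S$ by the observation above. If the colour-$1$ vertex is $x$ or $y$, the endpoint might fail to be in $S$; for example, $r$ itself. This is the main obstacle.

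To fix it, I would refine the colouring so that a vertex is coloured $1$ if it is in $A$ or is a neighbour of $x$ or $y$ that lies outside $S$. I would first try to check that this set is still separated from $R$ in the right sense; since $x$ and $y$ are not on any path in $K-x-y$, this needs care. A cleaner alternative is to take the walk path, which goes from $x$ through colour-$2$ vertices to $y$, and show that among the colour-$2$ door endpoints adjacent to $A$ there is a connected chain from a neighbour of $x$ to a neighbour of $y$. To do this, use the triangles around $x$: the link of $x$ is a path from $\ell$ to $r$, and its last colour-$1$ vertex in $A$ is followed by a vertex of $S$. Starting the walk at that link edge, instead of at the boundary edge $xr$, avoids the issue, and doing the same at $y$ yields a path $x,s_1,\dots,s_k,y$ with all $s_i\in S$.
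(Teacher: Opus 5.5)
Your proposal has two genuine gaps. The first is the claim that $A$ contains $L$. This fails as soon as $S\cap L\neq\varnothing$, and that case cannot be excluded: in the proof of the main discrete theorem, $S$ is an arbitrary separator supplied by Menger's theorem. Vertices of $L\cap S$ receive colour $2$, so the $L$-arc of $\partial K$ carries further bichromatic edges, and the door walk from $xr$ may terminate there instead of at $y$. In the extreme case $S\supseteq L$ you get $A=\varnothing$. The walk then merely rotates around $x$ and leaves through $x\ell$, and there is no ``last vertex of $A$'' in the link of $x$. (Vertices of $R\cap S$ are harmless in your colouring.) The paper disposes of this first. For each boundary vertex $\ell\in S$ it glues on a new vertex $\ell'$ joined to $\ell$ and to the two cycle-neighbours of $\ell$, with the two obvious triangles. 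The boundary is still an $n$-cycle through $x$ and $y$, $S$ still separates the two arcs, and after finitely many steps $S\cap V(\partial K)=\varnothing$. You need this or an equivalent reduction.

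The second gap is in your repair of doors whose colour-$1$ end is $x$ or $y$.
\begin{itemize}
\item The refined colouring cannot work: $r$ itself is a neighbour of $x$ outside $S$, so it would get colour $1$.
\item Starting at the link edge $ab$ only removes the \emph{initial} rotation around $x$. The walk leaving $xab$ through $ab$ is exactly the continuation of the walk from $xr$.
\item Nothing in your argument stops that walk from re-entering the star of $x$ (or of $y$) later. It can do so through a maximal run $v_{h+1},\dots,v_i$ of colour-$2$ vertices of the link of $x$ flanked by vertices of $A$; on a surface with handles this can happen. Inside such a run the doors are $xv_k$ with $h+1<k<i$, and $v_k$ need not be adjacent to $A$, hence need not lie in $S$. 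So your chain from $b$ to the corresponding vertex at $y$ can leave $S$.
\end{itemize}
A correct truncation is to take the \emph{last} door whose colour-$1$ end is $x$ and the first later door whose colour-$1$ end is $y$. Every door strictly between has its colour-$1$ end in $A$, and thus its colour-$2$ end in $S$. Inspecting the shared triangle shows that the two extreme doors share their colour-$2$ ends with their neighbours. The exception is $xy\in E(K)$; that case is trivial, but your link argument also misses it, since $y$ may then follow the last $A$-vertex of the link.

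The paper avoids this issue altogether by placing $x,y$ on the frontier side. It colours blue $=S\cup\{x,y\}$, red $=$ vertices joined to $R$ by a path avoiding $S\cup\{x,y\}$, and green $=$ the rest, and takes doors to be red--blue edges. The vertices traced by the walk are then blue, hence automatically in $S\cup\{x,y\}$. They start at $x$ and end at $y$, so a sub-walk from $x$ to $y$ has all interior vertices in $S$.
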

The proof is very similar to that of Sperner's lemma~{\cite[Chapter 27]{proofs_from_the_book}} and the ``hex theorem''~\cite{gale_hex}.
We remark it has been previously observed that Sperner-type theorems hold for more general classes of manifolds, see in particular \cite{musin2015extensions}.
\begin{proof}
    We begin by claiming that we may suppose that $S$ is disjoint from $V(\partial K)$.
    Indeed, if, say, $\ell\in L\cap S$, then we build a new triangulation $K'$ by introducing a new vertex $\ell'$, which is adjacent to $\ell$ and the two neighbors of $\ell$ in $C_n$, along with the two natural triangles (see \Cref{fig:spernerPadding}).
    We find that $K'$ is also an abstract triangulation with $\partial K'$ being an $n$-cycle, and $x,y$ still lie along this bounding cycle.
    Furthermore, $\partial K'-x-y$ consists of $L'=(L\setminus\{\ell\})\cup\{\ell'\}$ and $R'=R$ and $S$ still separates $L',R'$.
    We may repeat this process until $S$ contains no vertices of the bounding cycle.
    \begin{figure}
        \begin{center}
            \begin{tikzpicture}[scale=0.8]


                \draw[thick, black] (0,0) circle (4);

                \node[fill=black, circle, inner sep=-3, label={$x$}] (yup) at (0,4) {};
                \node[fill=black, circle, inner sep=-3] (ydn) at (0,-4) {};
                \node[below,yshift=-0.4em] (ylabel) at (0,-4) {$y$};

                \node[fill=black, circle, inner sep=-3] (r1) at ({4*cos(deg(65*(pi/180)))},{4*sin(deg(65*(pi/180)))}) {};
                \node[fill=black, circle, inner sep=-3] (r2) at ({4*cos(deg(40*(pi/180)))},{4*sin(deg(40*(pi/180)))}) {};
                \node[fill=black, circle, inner sep=-3] (r3) at ({4*cos(deg(15*(pi/180)))},{4*sin(deg(15*(pi/180)))}) {};

                \node[fill=black, circle, inner sep=-3] (r4) at ({4*cos(deg(65*(pi/180)))},{-4*sin(deg(65*(pi/180)))}) {};
                \node[fill=black, circle, inner sep=-3] (r5) at ({4*cos(deg(40*(pi/180)))},{-4*sin(deg(40*(pi/180)))}) {};
                \node[fill=black, circle, inner sep=-3] (r6) at ({4*cos(deg(15*(pi/180)))},{-4*sin(deg(15*(pi/180)))}) {};

                \node[fill=black, circle, inner sep=-3] (l1) at ({-4*cos(deg(65*(pi/180)))},{4*sin(deg(65*(pi/180)))}) {};
                \node[fill=black, circle, inner sep=-3] (l2) at ({-4*cos(deg(40*(pi/180)))},{4*sin(deg(40*(pi/180)))}) {};
                \node[fill=black, circle, inner sep=-3] (l3) at ({-4*cos(deg(15*(pi/180)))},{4*sin(deg(15*(pi/180)))}) {};

                \node[fill=black, circle, inner sep=-3] (l4) at ({-4*cos(deg(65*(pi/180)))},{-4*sin(deg(65*(pi/180)))}) {};
                \node[fill=black, circle, inner sep=-3] (l5) at ({-4*cos(deg(40*(pi/180)))},{-4*sin(deg(40*(pi/180)))}) {};
                \node[fill=black, circle, inner sep=-3] (l6) at ({-4*cos(deg(15*(pi/180)))},{-4*sin(deg(15*(pi/180)))}) {};

                \node[fill=black, circle, inner sep=-3] (i1) at (-2,{4*sin(deg(15*(pi/180)))}) {};
                \node[fill=black, circle, inner sep=-3] (i2) at (0.5,{4*sin(deg(15*(pi/180)))}) {};
                \node[fill=black, circle, inner sep=-3] (i3) at (-1.15,-1.65) {};
                \node[fill=black, circle, inner sep=-3] (i4) at (1.65,-2.25) {};
                \node[fill=black, circle, inner sep=-3] (i5) at (-0.5,2.5) {};

                \node[fill=black, circle, inner sep=-2.5] (o1) at (-3.85,3.65) {};
                \node[fill=black, circle, inner sep=-2.5] (o2) at (-5.5,-2) {};
                \node[fill=black, circle, inner sep=-2.5] (o3) at (2.5,-5) {};
                \node[fill=black, circle, inner sep=-2.5] (o4) at (4.75,-3.25) {};

                \node[rectangle, minimum size=4mm, fill=black] (or1) at (-3.85,3.65) {};
                \node[rectangle, minimum size=4mm, fill=black] (or2) at (-5.5,-2) {};
                \node[rectangle, minimum size=4mm, fill=black] (or3) at (2.5,-5) {};
                \node[rectangle, minimum size=4mm, fill=black] (or4) at (4.75,-3.25) {};

                \draw[thick] (l3) -- (r3);
                \draw[thick] (l6) -- (r5);
                \draw[thick] (l2) -- (i1);
                \draw[thick] (l1) -- (i1);
                \draw[thick] (l1) -- (i5);
                \draw[thick] (yup) -- (i5);
                \draw[thick] (r1) -- (i5);
                \draw[thick] (i5) -- (i1);
                \draw[thick] (i5) -- (i2);
                \draw[thick] (r1) -- (i2);
                \draw[thick] (r2) -- (i2);

                \draw[thick] (l5) -- (i3);
                \draw[thick] (l4) -- (i3);
                \draw[thick] (ydn) -- (i3);
                \draw[thick] (ydn) -- (i4);
                \draw[thick] (r4) -- (i4);
                \draw[thick] (i4) -- (r6);

                \draw[thick] (i2) -- (r6);
                \draw[thick] (l6) -- (i1);
                \draw[thick] (i1) -- (i3);
                \draw[thick] (i3) -- (i2);
                \draw[thick] (i2) -- (i4);

                \draw[thin, dashed] (l3) -- (-3.85,3.65) -- (l1);
                \draw[thin, dashed] (l6) -- (-5.5,-2);
                \draw[thin, dashed] (l3) -- (-5.5,-2) -- (l5);
                \draw[thin, dashed] (l2) -- (-3.85,3.65);

                \draw[thin, dashed] (r6) -- (4.75, -3.25) -- (2.5,-5) -- (ydn);
                \draw[thin, dashed] (r5) -- (4.75, -3.25);
                \draw[thin, dashed] (r5) -- (2.5,-5);
                \draw[thin, dashed] (r4) -- (2.5,-5);

                \node[star, star points=5, star point ratio=0.45, draw=blue, fill=blue, minimum size=2.5mm] (sr4) at ({4*cos(deg(65*(pi/180)))},{-4*sin(deg(65*(pi/180)))}) {};
                \node[star, star points=5, star point ratio=0.45, draw=blue, fill=blue, minimum size=2.5mm] (sr5) at ({4*cos(deg(40*(pi/180)))},{-4*sin(deg(40*(pi/180)))}) {};

                \node[star, star points=5, star point ratio=0.45, draw=blue, fill=blue, minimum size=2.5mm] (sl2) at ({-4*cos(deg(40*(pi/180)))},{4*sin(deg(40*(pi/180)))}) {};
                \node[star, star points=5, star point ratio=0.45, draw=blue, fill=blue, minimum size=2.5mm] (sl6) at ({-4*cos(deg(15*(pi/180)))},{-4*sin(deg(15*(pi/180)))}) {};

                \node[star, star points=5, star point ratio=0.45, draw=blue, fill=blue, minimum size=2.5mm] (si2) at (0.5,{4*sin(deg(15*(pi/180)))}) {};
                \node[star, star points=5, star point ratio=0.45, draw=blue, fill=blue, minimum size=2.5mm] (si3) at (-1.15,-1.65) {};
                \node[star, star points=5, star point ratio=0.45, draw=blue, fill=blue, minimum size=2.5mm] (si5) at (-0.5,2.5) {};

            \end{tikzpicture}
        \end{center}

        \caption{\label{fig:spernerPadding}Padding $K$ to ensure that $S\cap V(\partial K)=\varnothing$.}
    \end{figure}
    \medskip

    Now, color $v\in V(K)$ \emph{blue} if $v\in S\cup\{x,y\}$, \emph{red} if there is a path in the $1$-skeleton of $K$ from $v$ to $R$ using no vertices of $S\cup\{x,y\}$, and otherwise color $v$ \emph{green}.
    By construction, no red vertex is adjacent to any green vertex.
    Furthermore, since $S$ is disjoint from $\partial K$, the only two blue vertices in $\partial K$ are $x$ and $y$.

    We now build an auxiliary graph $G$ whose vertices are $T(K)\cup\{\partial K\}$ (think of $\partial K$ as the ``exterior face'') where $t_1 t_2\in E(G)$ if $t_1\cap t_2\in E(K)$ and has one red end-point and one blue end-point (see \Cref{fig:sperner} for an example).
    Since no red vertex is adjacent to any green vertex, we observe that each triangle of $T(K)$ is adjacent to either $0$ or $2$ other vertices in $G$.
    Additionally, the only two red--blue edges of $\partial K$ are $xr_1$ and $yr_2$ for some $r_1,r_2\in R$, so $\partial K$ has degree $2$ in $G$.
    Thus, in $G$, the vertex $\partial K$ belongs to a cycle.
    The blue vertices in the edges of $K$ crossed along this cycle in $G$ form a walk from $x$ to $y$, which concludes the proof.
    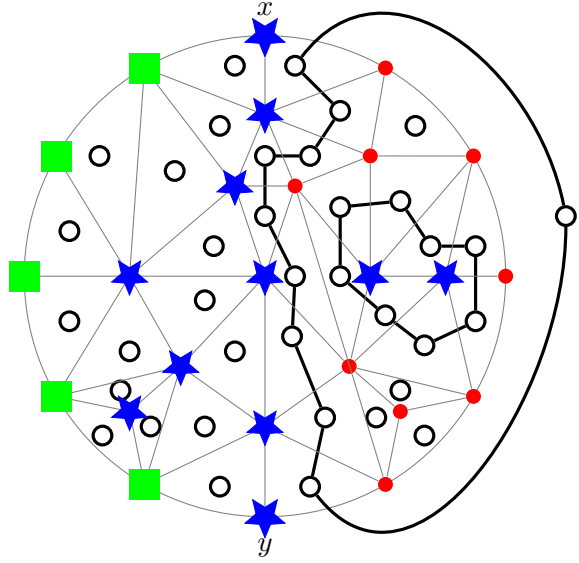
\begin{figure}
        \begin{center}
            \begin{tikzpicture}[scale=0.8,xscale=-1]

                \draw[thin, gray] (0,0) circle (4);

                \node[star, star points=5, star point ratio=0.45, draw=blue, fill=blue, minimum size=2.5mm, label=$x$] (yup) at (0,4) {};
                \node[star, star points=5, star point ratio=0.45, draw=blue, fill=blue, minimum size=2.5mm] (ydn) at (0,-4) {};
                \node[below,yshift=-0.4em] (ylabel) at (0,-4) {$y$};

                \node[fill=red, circle, inner sep=-2] (r1) at ({4*cos(deg(60*(pi/180)))},{4*sin(deg(60*(pi/180)))}) {};
                \node[fill=red, circle, inner sep=-2] (r2) at ({4*cos(deg(30*(pi/180)))},{4*sin(deg(30*(pi/180)))}) {};
                \node[fill=red, circle, inner sep=-2] (r3) at ({4*cos(0)},{4*sin(0)}) {};

                \node[fill=red, circle, inner sep=-2] (r5) at ({4*cos(deg(60*(pi/180)))},{-4*sin(deg(60*(pi/180)))}) {};
                \node[fill=red, circle, inner sep=-2] (r6) at ({4*cos(deg(30*(pi/180)))},{-4*sin(deg(30*(pi/180)))}) {};

                \node[fill=red, circle, inner sep=-2] (l1) at ({-4*cos(deg(60*(pi/180)))},{4*sin(deg(60*(pi/180)))}) {};
                \node[fill=red, circle, inner sep=-2] (l2) at ({-4*cos(deg(30*(pi/180)))},{4*sin(deg(30*(pi/180)))}) {};
                \node[fill=red, circle, inner sep=-2] (l3) at ({-4*cos(0)},{4*sin(0)}) {};

                \node[fill=red, circle, inner sep=-2] (l5) at ({-4*cos(deg(60*(pi/180)))},{-4*sin(deg(60*(pi/180)))}) {};
                \node[fill=red, circle, inner sep=-2] (l6) at ({-4*cos(deg(30*(pi/180)))},{-4*sin(deg(30*(pi/180)))}) {};

                \node[fill=red, circle, inner sep=-2] (i1) at (-1.75,2) {};
                \node[fill=red, circle, inner sep=-2] (i2) at (0.5,1.5) {};
                \node[fill=red, circle, inner sep=-2] (i3) at (-1.4,-1.5) {};
                \node[fill=red, circle, inner sep=-2] (i4) at (1.4,-1.5) {};
                \node[fill=red, circle, inner sep=-2] (i5) at (-0.5,1.5) {};
                \node[fill=red, circle, inner sep=-2] (i6) at (2.25,0) {};
                \node[fill=red, circle, inner sep=-2] (i7) at (-1.75,0) {};
                \node[fill=red, circle, inner sep=-2] (i8) at (-3,0) {};
                \node[fill=red, circle, inner sep=-2] (i9) at (0,2.7) {};
                \node[fill=red, circle, inner sep=-2] (i10) at (0,-2.5) {};
                \node[fill=red, circle, inner sep=-2] (i11) at (-2.25,-2.25) {};
                \node[fill=red, circle, inner sep=-2] (i12) at (2.25,-2.25) {};

                \node[fill=red, circle, inner sep=-2] (c) at (0,0) {};

                \node[circle, draw=black, very thick, inner sep=-2.5] (oc1) at (-0.5,3.5) {};
                \node[circle, draw=black, very thick, inner sep=-2.5] (oc2) at (-2.5,2.5) {};
                \node[circle, draw=black, very thick, inner sep=-2.5] (oc3) at (-1.25,2.75) {};
                \node[circle, draw=black, very thick, inner sep=-2.5] (oc4) at (-0.75,2) {};
                \node[circle, draw=black, very thick, inner sep=-2.5] (oc5) at (0,2) {};
                \node[circle, draw=black, very thick, inner sep=-2.5] (oc6) at (-2.25,1.25) {};
                \node[circle, draw=black, very thick, inner sep=-2.5] (oc7) at (-1.25,1.15) {};
                \node[circle, draw=black, very thick, inner sep=-2.5] (oc8) at (-3.5,0.5) {};
                \node[circle, draw=black, very thick, inner sep=-2.5] (oc9) at (-2.75,0.5) {};
                \node[circle, draw=black, very thick, inner sep=-2.5] (oc10) at (-1.25,0) {};
                \node[circle, draw=black, very thick, inner sep=-2.5] (oc11) at (-2,-0.65) {};
                \node[circle, draw=black, very thick, inner sep=-2.5] (oc12) at (-3.5,-0.75) {};
                \node[circle, draw=black, very thick, inner sep=-2.5] (oc13) at (-2.65,-1.15) {};
                \node[circle, draw=black, very thick, inner sep=-2.5] (oc14) at (-2.25,-1.9) {};
                \node[circle, draw=black, very thick, inner sep=-2.5] (oc15) at (-1.85,-2.35) {};
                \node[circle, draw=black, very thick, inner sep=-2.5] (oc16) at (-1,-2.35) {};
                \node[circle, draw=black, very thick, inner sep=-2.5] (oc17) at (-2.65,-2.65) {};
                \node[circle, draw=black, very thick, inner sep=-2.5] (oc18) at (-0.75,-3.5) {};

                \node[circle, draw=black, very thick, inner sep=-2.5] (ocl1) at (0,1) {};
                \node[circle, draw=black, very thick, inner sep=-2.5] (ocl2) at (-0.5,0) {};
                \node[circle, draw=black, very thick, inner sep=-2.5] (ocl3) at (-0.45,-1) {};
                \node[circle, draw=black, very thick, inner sep=-2.5] (ocl4) at (0.5,-1.25) {};
                \node[circle, draw=black, very thick, inner sep=-2.5] (ocl5) at (1,-0.4) {};
                \node[circle, draw=black, very thick, inner sep=-2.5] (ocl6) at (0.85,0.5) {};

                \node[circle, draw=black, very thick, inner sep=-2.5] (ocl7) at (0.5,3.5) {};
                \node[circle, draw=black, very thick, inner sep=-2.5] (ocl8) at (0.75,2.5) {};
                \node[circle, draw=black, very thick, inner sep=-2.5] (ocl9) at (1.5,1.75) {};
                \node[circle, draw=black, very thick, inner sep=-2.5] (ocl10) at (2.75,2) {};
                \node[circle, draw=black, very thick, inner sep=-2.5] (ocl11) at (3.25,0.75) {};
                \node[circle, draw=black, very thick, inner sep=-2.5] (ocl12) at (3.25,-0.75) {};
                \node[circle, draw=black, very thick, inner sep=-2.5] (ocl13) at (2.25,-1.25) {};
                \node[circle, draw=black, very thick, inner sep=-2.5] (ocl14) at (2.4,-1.9) {};
                \node[circle, draw=black, very thick, inner sep=-2.5] (ocl15) at (2.7,-2.65) {};
                \node[circle, draw=black, very thick, inner sep=-2.5] (ocl16) at (1.9,-2.5) {};
                \node[circle, draw=black, very thick, inner sep=-2.5] (ocl17) at (1,-2.5) {};
                \node[circle, draw=black, very thick, inner sep=-2.5] (ocl18) at (0.75,-3.5) {};
                \draw[very thick] (oc7) -- (oc6) -- (oc9) -- (oc8) -- (oc12) -- (oc13) -- (oc11) -- (oc10) -- (oc7);
                \draw[very thick] (oc1) -- (oc3) -- (oc4) -- (oc5) -- (ocl1) -- (ocl2) -- (ocl3) -- (oc16) -- (oc18);

                \node[circle, draw=black, very thick, inner sep=-2.5] (ocl19) at (-5,1) {};
                \draw[very thick] (oc18) .. controls (-2.5,-5.75) and (-5,-2) .. (ocl19) .. controls (-4.5,3.5) and (-2,5.5) .. (oc1);

                \draw[thin, gray] (yup) -- (i9);
                \draw[thin, gray] (i9) -- (i5);
                \draw[thin, gray] (i5) -- (l5);
                \draw[thin, gray] (i9) -- (i2);
                \draw[thin, gray] (i5) -- (i2);
                \draw[thin, gray] (c) -- (r3);
                \draw[thin, gray] (r1) -- (i1);
                \draw[thin, gray] (r1) -- (i2);
                \draw[thin, gray] (r1) -- (i6);
                \draw[thin, gray] (r2) -- (i6);
                \draw[thin, gray] (i2) -- (i6);
                \draw[thin, gray] (i1) -- (l2);
                \draw[thin, gray] (i1) -- (i5);
                \draw[thin, gray] (i1) -- (i7);
                \draw[thin, gray] (i7) -- (l3);
                \draw[thin, gray] (i7) -- (i5);
                \draw[thin, gray] (i8) -- (l6);
                \draw[thin, gray] (l2) -- (i8);
                \draw[thin, gray] (l2) -- (i7);
                \draw[thin, gray] (l1) -- (i1);
                \draw[thin, gray] (l1) -- (i9);

                \draw[thin, gray] (i5) -- (c);
                \draw[thin, gray] (i2) -- (c);
                \draw[thin, gray] (c) -- (ydn);
                \draw[thin, gray] (c) -- (i4);
                \draw[thin, gray] (i3) -- (c);

                \draw[thin, gray] (i4) -- (r6);
                \draw[thin, gray] (i4) -- (r5);
                \draw[thin, gray] (i4) -- (i6);
                \draw[thin, gray] (i4) -- (i10);
                \draw[thin, gray] (i4) -- (i12);
                \draw[thin, gray] (i12) -- (r6);
                \draw[thin, gray] (r6) -- (i6);
                \draw[thin, gray] (i12) -- (r5);
                \draw[thin, gray] (r5) -- (i10);
                \draw[thin, gray] (i3) -- (i8);
                \draw[thin, gray] (i3) -- (i7);
                \draw[thin, gray] (i3) -- (l6);
                \draw[thin, gray] (i3) -- (i10);
                \draw[thin, gray] (i3) -- (i11);
                \draw[thin, gray] (i11) -- (l6);
                \draw[thin, gray] (i11) -- (l5);
                \draw[thin, gray] (l5) -- (i10);

                \node[rectangle, minimum size=4mm, fill=green] (sq1) at ({4*cos(deg(60*(pi/180)))},{4*sin(deg(60*(pi/180)))}) {};
                \node[rectangle, minimum size=4mm, fill=green] (sq2) at ({4*cos(deg(30*(pi/180)))},{4*sin(deg(30*(pi/180)))}) {};
                \node[rectangle, minimum size=4mm, fill=green] (sq3) at ({4*cos(0)},{4*sin(0)}) {};
                \node[rectangle, minimum size=4mm, fill=green] (sq4) at ({4*cos(deg(60*(pi/180)))},{-4*sin(deg(60*(pi/180)))}) {};
                \node[rectangle, minimum size=4mm, fill=green] (sq5) at ({4*cos(deg(30*(pi/180)))},{-4*sin(deg(30*(pi/180)))}) {};

                \node[star, star points=5, star point ratio=0.45, draw=blue, fill=blue, minimum size=2.25mm] (si1) at (0,2.7) {};
                \node[star, star points=5, star point ratio=0.45, draw=blue, fill=blue, minimum size=2.25mm] (si2) at (0.5,1.5) {};
                \node[star, star points=5, star point ratio=0.45, draw=blue, fill=blue, minimum size=2.25mm] (si3) at (0,0) {};
                \node[star, star points=5, star point ratio=0.45, draw=blue, fill=blue, minimum size=2.25mm] (si4) at (2.25,0) {};
                \node[star, star points=5, star point ratio=0.45, draw=blue, fill=blue, minimum size=2.25mm] (si5) at (-1.75,0) {};
                \node[star, star points=5, star point ratio=0.45, draw=blue, fill=blue, minimum size=2.25mm] (si6) at (-3,0) {};

                \node[star, star points=5, star point ratio=0.45, draw=blue, fill=blue, minimum size=2.25mm] (si8) at (0,-2.5) {};
                \node[star, star points=5, star point ratio=0.45, draw=blue, fill=blue, minimum size=2.25mm] (si10) at (2.25,-2.25) {};
                \node[star, star points=5, star point ratio=0.45, draw=blue, fill=blue, minimum size=2.25mm] (si11) at (1.4,-1.5) {};

            \end{tikzpicture}
        \end{center}
        \caption{\label{fig:sperner}An example of the auxiliary graph $G$.}
    \end{figure}
\end{proof}

We next establish a fairly routine result about the lengths of paths with distinct endpoints which cross some cut in a cycle.
\begin{prop}\label[prop]{pathSum}
    Fix $x,y\in V(C_n)$, which are not adjacent, and let $L,R$ denote the two connected components of $C_n-x-y$.
    For distinct vertices $\ell_1,\dots,\ell_k\in L$ and $r_1,\dots,r_k\in R$,
    \[
        \sum_{i=1}^k d_{C_n}(\ell_i,r_i)\geq {k(k+2)\over 2}.
    \]
\end{prop}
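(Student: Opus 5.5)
The plan is to reduce everything to distances from the two cut vertices $x$ and $y$, and then use distinctness to force these distances to be large on average. Any shortest path in $C_n$ from a vertex of $L$ to a vertex of $R$ must pass through $x$ or through $y$, since $L$ and $R$ are the components of $C_n-x-y$. Hence for every $i$,
\[
d_{C_n}(\ell_i,r_i)=\min\bigl(d_{C_n}(\ell_i,x)+d_{C_n}(x,r_i),\; d_{C_n}(\ell_i,y)+d_{C_n}(y,r_i)\bigr),
\]
where each of the four distances on the right is measured along the arc of $C_n$ inside $L\cup\{x,y\}$ or $R\cup\{x,y\}$.

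Partition the indices into two sets. Let $X$ be the set of $i$ for which the minimum is attained via $x$, and let $Y$ be the set of the remaining indices. Set $m=\abs{X}$ and $p=\abs{Y}$, so $m+p=k$.

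Now bound the $X$-part of the sum. The vertices $\ell_i$ with $i\in X$ are distinct vertices of the path $L$, and the map $\ell\mapsto d_{C_n}(\ell,x)$ is injective on $L$ with values in $\{1,2,\dots,\abs{L}\}$. So the numbers $d_{C_n}(\ell_i,x)$ for $i\in X$ are $m$ distinct positive integers, and their sum is at least $1+2+\dots+m=m(m+1)/2$. The same argument applies to the numbers $d_{C_n}(x,r_i)$ for $i\in X$. Together these give
\[
\sum_{i\in X}d_{C_n}(\ell_i,r_i)\geq m(m+1).
\]
The symmetric argument with $y$ in place of $x$ gives $\sum_{i\in Y}d_{C_n}(\ell_i,r_i)\geq p(p+1)$.

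It remains to minimize $f(m)=m(m+1)+(k-m)(k-m+1)$ over integers $0\le m\le k$. The function $f$ is a convex quadratic in $m$, symmetric about $m=k/2$.
\begin{itemize}
\item If $k$ is even, the minimum is $2\cdot\frac k2\bigl(\frac k2+1\bigr)=\frac{k(k+2)}{2}$.
\item If $k$ is odd, the integer minimum is at $m=(k\pm1)/2$, giving $\frac{(k-1)(k+1)}{4}+\frac{(k+1)(k+3)}{4}=\frac{(k+1)^2}{2}$, which is at least $\frac{k(k+2)}{2}$.
\end{itemize}

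The only point needing care is the first step: the distances from $L$ to $x$ and to $y$ must be read along the arcs, so that distinct vertices of $L$ really do give distinct distances to $x$. This holds because $L$ is a path with $x$ adjacent to one of its endpoints. The rest is routine.
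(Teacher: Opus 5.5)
Your proof is correct and is essentially the paper's argument. You split the indices according to whether a shortest $\ell_i$--$r_i$ path passes through $x$ or $y$, use distinctness of the $\ell_i$ and of the $r_i$ to get at least $1+2+\cdots+s$ on each side of the cut vertex, and finish with convexity; the paper phrases the middle step as counting how many paths $P_i$ contain each vertex near $x$. One wording point: $\ell\mapsto d_{C_n}(\ell,x)$ need not be injective on all of $L$ (if $R$ is short, going around through $y$ can be shorter), but for $i$ in your set $X$ the arc distance you actually use coincides with $d_{C_n}(\ell_i,x)$, so nothing breaks.
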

\begin{proof}
    For each $i\in[k]$, let $P_i$ be a shortest path with first vertex $\ell_i$ and last vertex $r_i$, so $\abs{P_i}=d_{C_n}(\ell_i,r_i)+1$.
    Naturally, each $P_i$ contains either $x$ or $y$; without loss of generality, suppose that $P_1,\dots,P_s$ contain $x$ and $P_{s+1},\dots,P_k$ contain $y$.

    Now, consider the path on $2s+1$ many vertices which has $x$ as its center vertex; label the vertices $a_1,a_{2},\dots,a_s,x,b_s,b_{s-1},\dots,b_1$ in order where $a_1,\dots,a_s\in L$ and $b_s,\dots,b_1\in R$.
    Observe that because $\ell_1,\dots,\ell_s$ are distinct elements of $L$, for any $i\in[s]$, it must be the case that $a_i$ belongs to at least $i$ of the paths $P_1,\dots,P_s$.
    Symmetrically, $b_i$ belongs to at least $i$ of the paths $P_1,\dots,P_s$ as well.
    Finally, $x$ belongs to each of these $s$ paths.
    Therefore,
    \[
        \sum_{i=1}^s\abs{P_i}\geq s+2\sum_{i=1}^s i=s^2+2s.
    \]
    By symmetric reasoning about the paths passing through $y$,
    \[
        \sum_{i=s+1}^k\abs{P_i}\geq (k-s)^2+2(k-s).
    \]
    Combining these inequalities and using the convexity of the function $f(t)=t^2$, we bound,
    \[
        \sum_{i=1}^k\abs{P_i}\geq s^2+2s+(k-s)^2+2(k-s)\geq {k^2\over 2}+2k.
    \]
    Therefore,
    \[
        \sum_{i=1}^k d_{C_n}(\ell_i,r_i)=\sum_{i=1}^k\bigl(\abs{P_i}-1\bigr)\geq {k^2\over 2}+2k-k={k(k+2)\over 2}.\qedhere
    \]
\end{proof}

With these two preliminary results in hand, we can now prove our lower-bound on the discrete variant of Gromov's problem.
\begin{proof}[Proof of \Cref{discreteGromov}]
    Fix $x,y\in V(C_n)$ with $d_{C_n}(x,y)=\lfloor n/2\rfloor$.
    Thanks to \Cref{sperner}, we know that if $S\subseteq V(K)\setminus\{x,y\}$ separates $L,R$ in the $1$-skeleton of $K-x-y$, then $S$ contains the interior vertices of some path connecting $x$ and $y$.
    In particular, $\abs S\geq d_K(x,y)-1\geq \delta\cdot d_{C_n}(x,y)-1=\delta\cdot\lfloor n/2\rfloor-1$ since $K$ is a $\delta$-Lipschitz filling of $C_n$.
    Since this is true of any such $S$, we may therefore apply the classical Menger's theorem (see, e.g.,~{\cite[Section 4.2]{west_graphs}}) to conclude that there are at least $\delta\cdot\lfloor n/2\rfloor-1$ many vertex-disjoint paths connecting $L$ and $R$ in the $1$-skeleton of $K-x-y$.
    Label these paths $P_1,\dots,P_k$ and suppose that $P_i$'s endpoints are $\ell_i\in L$ and $r_i\in R$.
    Since the $P_i$'s are vertex-disjoint and we may assume no internal vertex resides in $C_n$, we bound
    \begin{align*}
        \abs{V(K)} &\geq n+\sum_{i=1}^k\bigl(\abs{P_i}-2\bigr)=n-k+\sum_{i=1}^k d_G(\ell_i,r_i)\geq n-k+\delta\sum_{i=1}^k d_{C_n}(\ell_i,r_i)\\
                   &\geq n-k+\delta{k(k+2)\over 2}.
    \end{align*}
    where the final inequality follows from \Cref{pathSum}.
    Now, we can take $\delta\lfloor n/2\rfloor-1\leq k\leq\delta\lfloor n/2\rfloor$, so
    \begin{align*}
        \abs{V(K)} &\geq n-\delta\biggl\lfloor{n\over 2}\biggr\rfloor+{\delta\over 2}\biggl(\delta\biggl\lfloor{n\over 2}\biggr\rfloor-1\biggr)\biggl(\delta\biggl\lfloor{n\over 2}\biggr\rfloor+1\biggr)\\
                   &= n-\delta\biggl\lfloor{n\over 2}\biggr\rfloor+{\delta^3\over 2}\biggl\lfloor{n\over 2}\biggr\rfloor^2-{\delta\over 2}\\
                   &\geq {\delta^3(n-1)^2\over 8}+{n-1\over 2}. \qedhere
    \end{align*}
\end{proof}

While \Cref{discreteGromov} yields a lower-bound on the number of vertices in a $\delta$-Lipschitz filling of $C_n$, we will actually require a lower-bound on the number of triangles in such a filling in order to apply it to Gromov's problem and establish \Cref{quantGromov}.
Luckily, Euler's formula allows us to easily derive such a bound.

\begin{corollary}\label[corollary]{triangleCount}
    Fix $0<\delta\leq 1$ and suppose that $K$ is a $\delta$-Lipschitz filling of $C_n$.
    Let $K'$ denote the triangulation formed by adding $n-2$ triangles to $K$ in order to close the cycle.
    If the Euler characteristic of $K'$ is $\chi$, then $\abs{T(K)}\geq{\delta^3\over 4}(n-1)^2+1-2\chi$.
\end{corollary}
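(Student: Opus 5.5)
The plan is a direct Euler-characteristic computation combined with the vertex bound of \Cref{discreteGromov}. Write $V=\abs{V(K)}$, $E=\abs{E(K)}$ and $T=\abs{T(K)}$. The first step is to relate $E$ and $T$ by double counting edge--triangle incidences in $K$. Every interior edge lies in exactly two triangles. The $n$ boundary edges, namely those of $\partial K=C_n$, lie in exactly one. Hence
\[
    3T=2E-n,\qquad\text{so}\qquad E=\frac{3T+n}{2}.
\]

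Next I will compute $\chi$ for $K'$. The closing disk is triangulated by $n-2$ triangles, say fanned from one vertex of $C_n$, and uses $n-3$ new diagonal edges and no new vertices. This gives
\[
    \chi=V-(E+n-3)+(T+n-2)=V-E+T+1.
\]
Equivalently, $\chi(K')=\chi(K)+\chi(\text{disk})-\chi(C_n)=\chi(K)+1$. Substituting $E=(3T+n)/2$ yields $\chi=V-\tfrac{T}{2}-\tfrac{n}{2}+1$, that is,
\[
    T=2V-n+2-2\chi.
\]

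The final step is to plug in \Cref{discreteGromov}, which gives $V\geq\frac{\delta^3}{8}(n-1)^2+\frac{n-1}{2}$. Then
\[
    T\geq\frac{\delta^3}{4}(n-1)^2+(n-1)-n+2-2\chi=\frac{\delta^3}{4}(n-1)^2+1-2\chi,
\]
which is exactly the claimed bound.

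The only point needing care is whether $K'$ is literally a simplicial complex. A diagonal of the closing fan might coincide with an edge already present in $K$, in which case that edge would lie in three triangles. I will sidestep this by defining $\chi$ as the alternating count of cells with multiplicity, i.e.\ treating $K'$ as the cell complex obtained by gluing a disk along $\partial K$. This is also the quantity relevant to the topology of the closed surface, and with it the computation above is unaffected. So the main obstacle is only this bookkeeping; the rest is arithmetic.
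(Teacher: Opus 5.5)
Your proposal is correct and is essentially the paper's proof: Euler's formula for the closed-up complex $K'$ gives $\abs{T(K)}=2\abs{V(K)}-n+2-2\chi$, and substituting \Cref{discreteGromov} yields the bound. The only differences are bookkeeping. You double count edge--triangle incidences in $K$ (so $3\abs{T(K)}=2\abs{E(K)}-n$), whereas the paper uses $\abs{E(K')}=\frac{3}{2}\abs{T(K')}$ in $K'$. Your cell-counting remark also handles a degeneracy the paper passes over silently: when $\delta<1$, a closing diagonal may coincide with an existing chord of $K$.
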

\begin{proof}
    Since $K'$ is a triangulation of a closed surface of Euler characteristic $\chi$, we know that $\abs{V(K')}-\abs{E(K')}+\abs{T(K')}=\chi$.
    Naturally $\abs{E(K')}={3\over 2}\abs{T(K')}$ since every edge of $K'$ resides in two triangles and every triangle has three edges.
    Therefore, $\abs{T(K')}=2\abs{V(K')}-2\chi$.
    Of course, $\abs{V(K')}=\abs{V(K)}$ and $\abs{T(K')}=\abs{T(K)}+n-2$, so, by additionally applying \Cref{discreteGromov},
    \begin{align*}
        \abs{T(K)} &=2\abs{V(K)}-n+2-2\chi\geq 2\biggl({\delta^3(n-1)^2\over 8}+{n-1\over 2}\biggr)-n+2-2\chi\\
                   &={\delta^3(n-1)^2\over 4}+1-2\chi.\qedhere
    \end{align*}
\end{proof}

\section{Continuous setting}
In order to actually apply our discrete inequality to Gromov's problem, we rely on the existence of ``balanced triangulations'' of surfaces.

We define a PL (piecewise linear) metric surface to be a metric surface $(\Omega,\rho)$ along with a \emph{finite} triangulation $T$ of $\Omega$ with the property that for every triangle $\sigma\in T$, there is a function $f_\sigma\colon\sigma\to\R^2$ such that $\rho(x,y)=\lVert f_\sigma(x)-f_\sigma(y)\rVert_2$ for every $x,y\in\sigma$.
Informally, a PL metric surface is formed by gluing together a finite number of Euclidean triangles.

\begin{lemma}\label[lemma]{triangulation}
    Let $M$ be any PL metric surface.
    There exists a sequence $\epsilon=\epsilon(k)$ with $\epsilon\to 0$ as $k\to\infty$ such that there is a triangulation $K=K(k)$ of $M$ with the following properties as $k\to\infty$:
    \begin{enumerate}
        \item Every edge of $K$ has length $\leq\epsilon+o(\epsilon)$,
        \item Every edge of $\partial K$ additionally has length $\geq\epsilon-o(\epsilon)$,
        \item All but $O(1/\epsilon)$ many triangles in $K$ are equilateral with side-length $\epsilon$.
    \end{enumerate}
\end{lemma}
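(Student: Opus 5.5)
The plan is to build $K$ triangle by triangle. Inside each triangle we lay down a fine equilateral grid, and we use a thin ``collar'' region near the $1$-skeleton of the original PL triangulation $T$ to glue neighbouring grids together with few, slightly irregular triangles. Fix $T$ with triangles $\sigma_1,\dots,\sigma_m$, each isometric via $f_{\sigma_j}$ to a Euclidean triangle. Put $\epsilon=1/k$. Let $h=h(k)$ be a width with $\epsilon\ll h\to0$, for instance $h=\sqrt\epsilon$. (We will see this choice is too lax, and revise it.)

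In each $\sigma_j$ we proceed as follows. Let $\sigma_j^{\mathrm{in}}$ be the inner parallel triangle at distance $h$ from $\partial\sigma_j$. Intersect the planar equilateral lattice of side $\epsilon$, transported by $f_{\sigma_j}$, with $\sigma_j^{\mathrm{in}}$, and keep every lattice triangle lying entirely in $\sigma_j^{\mathrm{in}}$. All of these are equilateral of side $\epsilon$. The leftover region $A_j=\sigma_j\setminus(\text{kept triangles})$ is a thin polygonal annulus. Its width is between $h$ and $h+O(\epsilon)$, and its inner boundary is a lattice zigzag with $O(1/\epsilon)$ edges of length $\epsilon$. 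On every edge $e$ of $T$ we place points at spacing exactly $\mathrm{len}(e)/\lceil \mathrm{len}(e)/\epsilon\rceil=\epsilon-o(\epsilon)$. Because both sides of a glued edge receive the same points, the triangulations will match across $e$. For boundary edges of $M$ this also gives item~2.

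Next, triangulate each $A_j$ using vertices of edge length at most $\epsilon+o(\epsilon)$. Fill $A_j$ with roughly $h/\epsilon$ concentric ``rows'', each a polygonal curve parallel to $\partial\sigma_j$. Place points on each row at spacing in $[\epsilon/2,\epsilon]$, and spread the rows about $\epsilon\sqrt3/2$ apart. Then join consecutive rows by a strip triangulation: walk along both rows together, always adding the shorter diagonal. Standard estimates make every edge at most $\epsilon(1+o(1))$, but near the corners of $\sigma_j$ this needs care. Since $h/\epsilon\to\infty$, the length mismatch between consecutive rows is small relative to $\epsilon$ per step, and the corners can be handled by coning on a few vertices.

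The main obstacle is item~3, because with $h=\sqrt\epsilon$ the annulus contains about $(h/\epsilon)(1/\epsilon)=\epsilon^{-3/2}$ triangles, not $O(1/\epsilon)$. So $h$ must really be $O(\epsilon)$, meaning only a bounded number of rows. The task is then to match the zigzag lattice boundary to the evenly spaced edge points in $O(1)$ rows with edges at most $\epsilon+o(\epsilon)$. My first attempt is this: rotate or translate the lattice in $\sigma_j$ so that one side of $\sigma_j$ lies along a lattice line, and choose $\epsilon$ as a common near-divisor of the edge lengths. Then a single row of triangles bridges each side. Near the other two sides the lattice boundary is a zigzag at distance $O(\epsilon)$, and we triangulate that strip greedily, connecting each edge point to the closest lattice vertices. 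Each strip has width at most about $\epsilon\sqrt3/2+$ slack and contributes $O(1/\epsilon)$ triangles. Edge lengths stay at most $\epsilon+o(\epsilon)$ provided the strip is thin enough, and the corners contribute $O(1)$ more triangles. If the greedy strip step forces some edges slightly longer than $\epsilon$, we can absorb this by shrinking the lattice side to $\epsilon(1-\eta)$ with $\eta\to0$ slowly. That keeps ``equilateral'' up to renaming $\epsilon$, so every bound holds with $\epsilon$ replaced by the actual lattice side. Summing over the finitely many $\sigma_j$ gives all three properties.
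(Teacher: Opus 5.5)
There is a genuine gap in the step that bridges the lattice zigzag to the points on a side of $\sigma_j$ not parallel to a lattice line. You join these directly, with no new vertices, and assert edges of length at most $\epsilon+o(\epsilon)$ ``provided the strip is thin enough''. But the width of that strip is forced by the lattice; you cannot choose it.

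Away from the corners, the zigzag vertices are exactly the lattice points at distance between $0$ and $w=\epsilon\cos\alpha$ from the side. Here $\alpha\in[0^\circ,30^\circ]$ is the angle between the side's normal and the nearest lattice direction. So $w>\frac{\sqrt3}{2}\epsilon$ unless the side is parallel to a lattice line, and in a general triangle at most one side can be. For a side in general position and small $\epsilon$, these vertices occur at distances arbitrarily close to $w$ and at arbitrary positions relative to the edge points, which are about $\epsilon$ apart. So some zigzag vertex $v$ lies at distance $\approx w$ from the side while its nearest edge points are $\approx\epsilon/2$ away along it.

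The collar angle at each zigzag vertex is at least $120^\circ$: a $60^\circ$ angle would mean the lattice triangle there has all its vertices in $\sigma_j$, so it would have been kept. Hence, without new vertices, $v$ must be joined either to an edge point or to a lattice point at distance at least $\sqrt3\,\epsilon$. Either way some edge has length at least about $\epsilon\sqrt{\cos^2\alpha+1/4}$, a constant factor (up to about 12\%) above $\epsilon$.

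Your patch of shrinking the lattice side to $\epsilon(1-\eta)$ with $\eta\to0$ changes lengths only by a factor $1+o(1)$, so it cannot absorb this. A constant-factor shrink violates item~1 instead, since the boundary edges stay about $\epsilon$ long and items~1--3 all refer to one and the same $\epsilon$.

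Sharp corners fail the same way. Near a corner of small angle $\theta$, the tip not covered by lattice triangles has length of order $\epsilon/\theta$. Triangulating it using only the points on its two sides forces an edge between points at distances about $i\epsilon$ and $(i\pm1)\epsilon$ from the corner, with $i\approx 1/(2\theta)$. That edge has length about $\frac{\sqrt5}{2}\epsilon$.

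The missing idea is to add new interior vertices in the collar, $O(1)$ of them per boundary vertex, so that item~3 is preserved. The paper does exactly this in two stages.
\begin{enumerate}
    \item It triangulates the collar $A$ coarsely: each vertex of $\partial A$ is joined to the nearest vertex on the other component of $\partial A$, and the resulting quadrilaterals are split. This gives $O(1/\epsilon)$ triangles with edges of length $O(\epsilon)$.
    \item It performs a bounded number of rounds of barycentric subdivision in which edges of $\partial A$ are never bisected. Each round takes the longest edge length $L$ to at most $\max\{\epsilon+o(\epsilon),\frac23 L\}$ and multiplies the number of triangles by a bounded factor.
\end{enumerate}
After $O(1)$ rounds, every edge is at most $\epsilon+o(\epsilon)$ and only $O(1/\epsilon)$ triangles are non-equilateral.

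With this step you need neither the alignment of a side with a lattice line nor a common near-divisor $\epsilon$. (Having fixed $\epsilon=1/k$, you have not actually arranged the near-divisor; it would require a simultaneous Diophantine approximation such as the paper's use of Dirichlet's theorem.) Your equal spacing $\mathrm{len}(e)/\lceil\mathrm{len}(e)/\epsilon\rceil=\epsilon-O(\epsilon^2)$ along each edge is already an adequate input to this construction.
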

Note that the implicit functions/constants in the little-oh/big-oh notation depend on $M$.
\begin{proof}
    We begin by showing that there is a sequence $\epsilon=\epsilon(k)\to 0$ such that each $e\in E(M)$ can be paritioned into intervals $I_1(e),\dots,I_{s(e)}(e)$, where $I_1(e),\dots,I_{s(e)-1}(e)$ all have length exactly $\epsilon$ and $I_{s(e)}(e)$ has length $\epsilon\pm o(\epsilon)$.

    Set $m=\abs{E(M)}$.\
    Dirichlet's approximation theorem~{\cite[Theorem 1B in Chapter II]{lang_diophantine}} tells us that for any positive integer $k$, we can find integers $\{p_k(e):e\in E(M)\}$ and an integer $q_k$ with $1\leq q_k\leq k$ satisfying
    \[
        \biggl\lvert\ell(e)-{p_k(e)\over q_k}\biggr\rvert\leq{1\over q_k k^{1/m}}\qquad\text{for all }e\in E(M),
    \]
    where $\ell(e)$ denotes the length of the edge $e$.
    Set
    \[
        L(k)\eqdef\biggl\lceil{k^{1/m}\over\log k}\biggr\rceil,\qquad\text{and}\qquad\epsilon(k)\eqdef{1\over q_k L(k)}.
    \]
    Since $m=\abs{E(M)}$ is fixed, we find that $\epsilon\to 0$ as $k\to\infty$ as needed.
    By defining $s(e)=L(k)\cdot p_k(e)$, we can then partition $e$ into intervals $I_1(e),\dots,I_{s(e)}(e)$ such that $I_1(e),\dots,I_{s(e)-1}(e)$ each have length $\epsilon$ and $I_{s(e)}(e)$ satisfies
    \begin{align*}
        \bigl\lvert \ell\bigl(I_{s(e)}(e)\bigr)-\epsilon\bigr\rvert &= \biggl\lvert \biggl(\ell(e)-\sum_{i=1}^{s(e)-1}\ell\bigl(I_i(e)\bigr)\biggr) - \epsilon\biggr\rvert = \bigl\lvert \ell(e)-s(e)\epsilon\bigr\rvert \\
                                                                    &= \biggl\lvert \ell(e)-{p_k(e)\over q_k}\biggr\rvert\leq {1\over q_k k^{1/m}}=O\biggl({\epsilon\over\log k}\biggr)=o(\epsilon),
    \end{align*}
    as needed.
    \medskip

    With this step done, the fact that $M$ consists of finitely many triangles means that it suffices to prove the following in order to finish the proof of the claim:
    \begin{claim}
        Fix a triangle $T$ in $\R^2$.
        For a sequence $\epsilon=\epsilon(k)\to 0$, consider any subdivision of the edges of $T$ into intervals of lengths $\epsilon\pm o(\epsilon)$.
        There is a triangulation of $T$ that does not introduce any new points along $\partial T$ such that every edge has length at most $\epsilon+o(\epsilon)$ and all but at most $O(1/\epsilon)$ many triangles are equilateral with side-length $\epsilon$.
    \end{claim}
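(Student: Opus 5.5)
We will tile the interior of $T$ by a fixed equilateral lattice and repair only a thin strip along $\partial T$. Fix the triangular lattice $\Lambda_\epsilon$ in $\R^2$ of side-length $\epsilon$. Let $T_\epsilon$ be the set of points of $T$ at distance at least $C\epsilon$ from $\partial T$, where $C$ is a constant depending only on the smallest angle of $T$. Take $U$ to be the union of all closed lattice triangles of $\Lambda_\epsilon$ contained in $T_\epsilon$; these will be the equilateral triangles of our triangulation. The region $T\setminus U$ lies within distance $O(\epsilon)$ of $\partial T$, so it has area $O(\epsilon)$. Hence $T$ contains only $O(1/\epsilon^2)$ lattice triangles in total, and the number of triangles of $T$ not placed in $U$ will be shown to be $O(1/\epsilon)$.

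Next we triangulate the annular region $A=T\setminus\operatorname{int}U$. The inner boundary $\partial U$ is a lattice polygon whose edges have length exactly $\epsilon$; it has $O(1/\epsilon)$ vertices and stays at distance between $C\epsilon$ and $(C+2)\epsilon$ from $\partial T$. The outer boundary is $\partial T$, subdivided into the prescribed intervals of length $\epsilon\pm o(\epsilon)$; it also has $O(1/\epsilon)$ vertices. Between these two polygons we insert $O(1)$ intermediate ``rings'' of points:
\begin{itemize}
\item away from the corners, each ring is placed parallel to the sides of $T$ at spacing roughly $\epsilon\sqrt3/2$;
\item the points within each ring are spaced at roughly $\epsilon$;
\item near each corner of $T$ we use a fan-like local patch.
\end{itemize}
We then triangulate consecutive rings by the standard ``zipper'' construction. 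This walks along two nearly parallel polygonal chains and always adds the shorter of the two candidate diagonals, producing triangles whose edges have length about $\epsilon$. Because each ring has $O(1/\epsilon)$ points and there are $O(1)$ rings, the zipper contributes $O(1/\epsilon)$ triangles. The inner boundary vertices and the prescribed outer boundary points are used exactly as given, so no new points are introduced on $\partial T$.

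The main obstacle is guaranteeing that every edge in $A$ has length at most $\epsilon+o(\epsilon)$ and not merely $O(\epsilon)$. This is genuinely delicate, since the strip must have width $\Theta(\epsilon)$, the lattice is not aligned with the sides of $T$, and acute corners cause crowding. My first approach would be to choose the rings so that adjacent rings are at distance at most $\epsilon\sqrt3/2\cdot(1+o(1))$. I would also space the points on each ring at distance at most $\epsilon(1+o(1))$ and stagger them, so that each zipper triangle is nearly equilateral. Where two rings have different point densities, I would locally insert extra points, at spacing at most $\epsilon$, to interpolate between them.

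If sharp control fails near the corners, there is a fallback at the corners of $T$. There I would locally refine further, using triangles of side much smaller than $\epsilon$ in an $O(\epsilon)$-neighborhood of each corner. This adds only $O(1)$ triangles per corner, since the refinement scale there is still a fixed fraction of $\epsilon$. It keeps all edges at most $\epsilon$ while preserving the $O(1/\epsilon)$ bound on non-equilateral triangles, which is all the claim requires.
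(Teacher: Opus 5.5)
\textbf{There is a genuine gap in the key step.} Your overall layout matches the paper's: equilateral lattice triangles well inside $T$, plus a strip $A$ of width $\Theta(\epsilon)$ triangulated with $O(1/\epsilon)$ triangles and no new vertices on $\partial T$ or on $\partial U$. But the step that carries essentially all the content is not established: every edge in $A$ must have length at most $\epsilon+o(\epsilon)$. The mechanism you propose for it cannot work as stated.

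Consider a zipper between two chains at distance $h\approx\epsilon\sqrt3/2$ whose points are spaced $\approx\epsilon$. It produces edges $\le\epsilon(1+o(1))$ only if the chains are staggered to within $o(\epsilon)$. A triangle on a chain edge $p_ip_{i+1}$ whose third vertex $q$ projects onto $p_i$ has $|p_{i+1}q|\approx\sqrt{h^2+\epsilon^2}=\tfrac{\sqrt7}{2}\epsilon$. You cannot impose staggering, because the two boundary chains of $A$ are fixed: the prescribed subdivision of $\partial T$, and the lattice polygon $\partial U$.

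Worse, $\partial U$ is not parallel to the sides of $T$ when a side is not aligned with a lattice direction. It is a staircase whose vertices oscillate by $\Theta(\epsilon)$ in distance from $\partial T$, and whose projections onto that side are irregularly spaced. So the zipper's ``two nearly parallel chains'' hypothesis fails exactly at the last interface. Staggering the intermediate rings against each other only pushes the problem there. ``Locally insert extra points at spacing at most $\epsilon$'' does not say how the bound is achieved at that interface. Also, ``nearly equilateral'' triangles give edges $\epsilon(1+O(1))$, not $\epsilon(1+o(1))$.

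\textbf{How to fix it.} The right idea is already in your corner fallback, but it must be applied to the whole strip, not just near the corners. Do not try to make the strip's triangles nearly equilateral. Accept any triangulation of $A$ with $O(1/\epsilon)$ triangles and all edges $O(\epsilon)$, then refine it by a constant factor while never subdividing edges of $\partial A$. This costs only a constant factor in the triangle count: $A$ has area $O(\epsilon)$, so working at scale $c\epsilon$ uses $O(1/(c^2\epsilon))=O(1/\epsilon)$ triangles.

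The paper does exactly this, in two steps.
\begin{enumerate}
\item It joins each vertex of $\partial P$ to its nearest vertex of $\partial T$ and vice versa, then adds diagonals. This gives a coarse triangulation with edges $\le 2C\epsilon$.
\item It then performs $\lceil\log_{3/2}(2C)\rceil$ rounds of barycentric subdivision in which edges of $\partial A$ are never bisected.
\end{enumerate}
Each round has the following effects.
\begin{itemize}
\item It multiplies every non-boundary edge length by at most $2/3$, since a centroid-to-vertex segment is $2/3$ of a median and a median is at most the longest side.
\item It leaves the boundary edges untouched; these already have length $\le\epsilon+o(\epsilon)$.
\item It adds no points to $\partial T$ or $\partial P$.
\item It multiplies the number of triangles by a bounded factor.
\end{itemize}
This removes any need for geometric fine-tuning along the sides or at the corners.
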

    Here, the little-oh/big-oh may depend on the triangle $T$.
    \begin{proof}
        Consider the natural triangulation of $\R^2$ by equilateral triangles of side-length $\epsilon$ (i.e.\ the dual of the standard hexagonal lattice).
        Let $P$ denote those triangles which are contained completely within the interior of the triangle $T$ (see \Cref{fig:equilateral}).
        By moving $P$ by a small isometry, we may suppose that every point in $V(\partial P)$ has a unique closest point in $V(\partial T)$ and vice versa.

        \begin{figure}
            \begin{center}
                \begin{tikzpicture}[x={(1,0)},y={(0.5,0.866)}]
                    \draw[ultra thick] (-0.5,-0.5)--(1,5.5)--(6.8,-1.5)--cycle;
                    \draw (4,-1)--(6,-1);
                    \draw (0,0)--(5,0);
                    \draw (0,1)--(4,1);
                    \draw (1,2)--(3,2);
                    \draw (1,3)--(3,3);
                    \draw (1,4)--(2,4);
                    \draw (1,5)--(3,3);
                    \draw (1,4)--(6,-1);
                    \draw (1,3)--(5,-1);
                    \draw (1,2)--(4,-1);
                    \draw (1,1)--(2,0);
                    \draw (0,1)--(1,0);
                    \draw (0,0)--(0,1);
                    \draw (1,0)--(1,5);
                    \draw (2,0)--(2,4);
                    \draw (3,0)--(3,3);
                    \draw (4,-1)--(4,1);
                    \draw (5,-1)--(5,0);
                \end{tikzpicture}
            \end{center}
            \caption{\label{fig:equilateral}A near-filling of $T$ by $\epsilon$-equilateral triangles.}
        \end{figure}
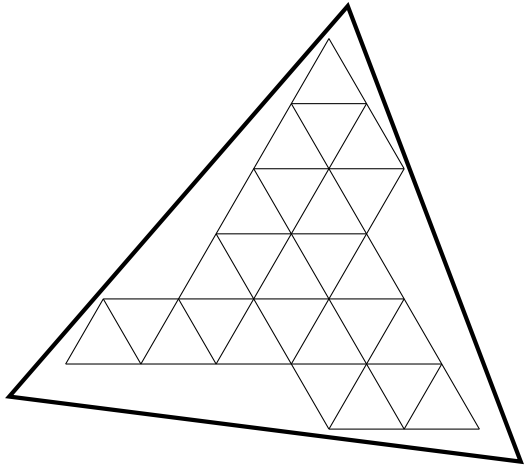

        Consider the annulus $A=T\setminus P$, so $\partial A=\partial T\sqcup\partial P$.
        We will show that we can triangulate $A$ without introducing any new points along $\partial A$ using at most $O(1/\epsilon)$ many triangles wherein every edge has length at most $\epsilon+o(\epsilon)$.
        This will conclude the proof of the claim since $P$ consists only of equilateral triangles with side-length $\epsilon$.
        \medskip

        For an edge $e\in E(\partial P)$, let $t(e)$ denote the equilateral triangle using $e$ as one of its sides that is \emph{not} contained in $P$.
        By the definition of $P$, we know that $t(e)$ must intersect $\partial T$.
        Since the edges of $E(\partial T)$ all have length $\leq\epsilon+o(\epsilon)$, this implies that any point in $V(\partial P)$ is at distance at most $2\epsilon+o(\epsilon)$ from some point in $V(\partial T)$.

        Next, consider a point $v\in V(\partial T)$.
        If $v$ belongs to the triangle $t(e)$ for some $e\in E(\partial P)$, then $v$ is at distance at most $\epsilon$ from some point on $V(\partial P)$.
        Otherwise, let $y\in V(\partial P)$ be the closest point to $v$ and consider the triangles $t(xy)$ and $t(yz)$ where $x,z$ are the neighbors of $y$ in $\partial P$.
        Observe that $y$ belongs to either $1$, $2$ or $3$ triangles since $v$ is in neither $t(xy)$ nor $t(yz)$.
        If $y$ belongs to $3$ triangles, then $x,y,z$ are colinear and so $v$ is certainly at distance at most $2\epsilon+o(\epsilon)$ from $y$.
        Otherwise, $y$ belongs to either $1$ or $2$ triangles, in which case the distance from $v$ to $y$ is bounded above $C\epsilon$ where $C$ is some constant depending only on the angle at the vertex of $T$ between the triangles $t(xy),t(yz)$.
        \medskip

        Connect each point of $V(\partial P)$ to its closest neighbor in $V(\partial T)$ and connect each point in $V(\partial T)$ to its closest neighbor in $V(\partial P)$.
        From the previous paragraph, each of these edges has length at most $C\epsilon$.
        Furthermore, due to the triangle inequality, no two edges cross and so we have partitioned $A$ into triangles and quadrilaterals.
        By connecting some pair of opposite corners of each quadrilateral, we arrive at a triangulation of $A$ (with no new points in $\partial A$) such that every edge has length at most $2C\epsilon$; call this triangulation $X$.
        Naturally, there are at most $2\abs{V(\partial T)}=O(1/\epsilon)$ many triangles in $X$.

        We now perform ``partial barycentric subdivisions'' on all triangles in $X$ to form a triangulation $X'$.
        Here, a ``partial barycentric subdivision'' is a barycentric subdivision, except we \emph{do not} bisect any edge that belongs to $E(\partial A)$ (see \Cref{fig:barycentric}).
        In doing so, the length of the longest edge in $X'$ is at most the maximum of $\epsilon+o(\epsilon)$ and $2/3$ the length of the longest edge in $X$.
        Additionally, $X'$ contains at most $5$ times the number of triangles as does $X$.
        Thus, by repeating this procedure at most $\lceil\log_{3/2}(2C)\rceil$ many times, we arrive at a triangulation of $A$ with no new points on $\partial A$ which has every edge of length $\leq\epsilon+o(\epsilon)$ and has at most $5^{\lceil\log_{3/2}(2C)\rceil}\cdot O(1/\epsilon)=O(1/\epsilon)$ many triangles, as needed.
        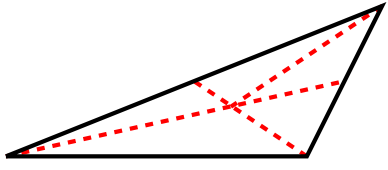
\begin{figure}[ht]
            \begin{center}
                \begin{tikzpicture}
                \begin{scope}
        [cm={1,0,1.5,1,(0,0)}] 
                    \draw[red,ultra thick, dashed] (0,0)--(3,1);
                    \draw[red,ultra thick, dashed] (4,0)--(1,1);
                    \draw[red,ultra thick, dashed] (2,2)--(2,0.66);
                    \draw[ultra thick] (0,0)--(4,0)--(2,2)--(0,0);
                    \end{scope}
                \end{tikzpicture}
            \end{center}
            \caption{\label{fig:barycentric}An example of partial barycentric subdivision where the bottom edge of the triangle is not subdivided.}
        \end{figure}
    \end{proof}
\end{proof}

Now that we know that PL metric surfaces can be triangulated by mostly equilateral triangles, we can apply our discrete bound to Gromov's problem on such surfaces.

\begin{theorem}\label[theorem]{gromovForPL}
    Fix $0<\delta\leq 1$.
    If $M$ is any PL metric surface which is a $\delta$-Lipschitz filling of the Riemannian circle of circumference $\ell$, then the surface area of $M$ is at least ${\sqrt{3}\over 16}\delta^3\ell^2$.
\end{theorem}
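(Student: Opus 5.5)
Apply \Cref{triangulation} to $M$ to obtain triangulations $K=K(k)$ with mesh $\epsilon\to 0$. Then view $K$ as an abstract triangulation whose boundary is a cycle $C_n$, check that it is approximately a $\delta$-Lipschitz filling of $C_n$, apply \Cref{triangleCount}, and convert the triangle count to area using the fact that almost all triangles are equilateral with area $\frac{\sqrt3}{4}\epsilon^2$.

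\textbf{Number of boundary vertices.} By property 2, every boundary edge has length $\epsilon\pm o(\epsilon)$, and the boundary has total length $\ell$. Hence $\partial K$ is a cycle $C_n$ with $n=(1+o(1))\ell/\epsilon$. I would first replace $K$ by a simplicial complex if necessary, for instance by one further subdivision. This costs only constant factors in the mesh, so I would instead build it into the choice of $\epsilon$.

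\textbf{Lipschitz constant of $K$.} For boundary vertices $x,y$, take a shortest path $x=v_0,\dots,v_m=y$ in the $1$-skeleton of $K$. Each edge has metric length at most $\epsilon(1+o(1))$ by property 1, so
\[
m\,\epsilon(1+o(1))\geq d_M(x,y)\geq \delta\, d_{\partial M}(x,y).
\]
Along the boundary, each edge has length at least $\epsilon(1-o(1))$, so $d_{\partial M}(x,y)\geq (1-o(1))\,\epsilon\, d_{C_n}(x,y)$. Combining these gives $d_K(x,y)\geq \delta(1-o(1))\,d_{C_n}(x,y)$. Thus $K$ is a $\delta'$-Lipschitz filling of $C_n$ with $\delta'=\delta(1-o(1))$; if $\delta'$ would exceed $1$, simply cap it at $\delta$ times that factor.

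\textbf{Applying the discrete bound.} \Cref{triangleCount} then gives
\[
\abs{T(K)}\geq \tfrac{\delta'^3}{4}(n-1)^2+1-2\chi.
\]
Here $\chi$ is the Euler characteristic of the closed surface obtained by capping $M$ with a disk. It is a fixed topological constant, independent of $k$. So $\abs{T(K)}\geq (1-o(1))\frac{\delta^3\ell^2}{4\epsilon^2}$. By property 3, all but $O(1/\epsilon)$ of these triangles are equilateral of side $\epsilon$, with area $\frac{\sqrt3}{4}\epsilon^2$. Therefore
\[
\operatorname{Area}(M)\geq\Bigl(\abs{T(K)}-O(1/\epsilon)\Bigr)\frac{\sqrt3}{4}\epsilon^2\geq (1-o(1))\frac{\sqrt3}{16}\delta^3\ell^2-O(\epsilon).
\]
Letting $k\to\infty$ yields the stated bound.

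\textbf{Main obstacle.} I expect the main difficulty to be the bookkeeping that makes $K$ a genuine abstract triangulation with $\partial K=C_n$, meaning a simplicial complex in which each edge lies in one or two triangles. The triangulation from \Cref{triangulation} might contain degenerate identifications, for example two vertices joined by two edges. The small-mesh property should rule these out once $\epsilon$ is much smaller than the injectivity scale of the PL structure. Failing that, one barycentric subdivision fixes it, at the cost of a constant factor in both edge lengths and triangle counts. That would break the sharp equilateral count, so I would prefer to argue that for small $\epsilon$ the triangulation is already simplicial.
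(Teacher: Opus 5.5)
Your proposal is correct and follows the paper's proof essentially step for step. You take the triangulations $K$ from \Cref{triangulation}, check that $\partial K=C_n$ with $n=(1+o(1))\ell/\epsilon$ and that $K$ is a $(1-o(1))\delta$-Lipschitz filling of $C_n$, apply \Cref{triangleCount} with the fixed Euler characteristic $\chi$, and convert the triangle count to area via the equilateral triangles. The paper never raises your simpliciality concern and simply treats the output of \Cref{triangulation} as an abstract triangulation. Your preferred resolution, that $K$ is already simplicial for small $\epsilon$, is the right one; your first suggestion (an extra subdivision absorbed into the choice of $\epsilon$) would, as you later note yourself, spoil the constant $\frac{\sqrt3}{16}$.
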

\begin{proof}
    Fix a sequence $\epsilon=\epsilon(k)\to 0$ and triangulations $K=K(k)$ of $M$ as in \Cref{triangulation}.
    By design, every edge of $\partial K$ has length $\epsilon\pm o(\epsilon)$ and so $\partial K=C_n$ where $n=\bigl(1\pm o(1)\bigr){\ell\over\epsilon}$.
    Furthermore,
    for any vertices $x$ and $y$ of $K$,
    \[
        d_{\partial K}(x,y)=\bigl(1\pm o(1)\bigr){d_{\partial M}(x,y)\over \epsilon}.
    \]
    Here, we are treating $K$ as an abstract triangulation and so $d_{\partial K}$ refers to graph-distance.
    Furthermore, since every edge of $K$ has length $\leq\epsilon+o(\epsilon)$, we find that
    \[
        d_K(x,y)\geq\bigl(1-o(1)\bigr){d_M(x,y)\over\epsilon}.
    \]
    Since $M$ is a $\delta$-Lipschitz filling of $\partial K$, we then have
    \[
        d_K(x,y)\geq\bigl(1-o(1)\bigr){d_M(x,y)\over\epsilon}\geq\bigl(1-o(1)\bigr){\delta\cdot d_{\partial M}(x,y)\over\epsilon}\geq\bigl(1- o(1)\bigr)\cdot\delta\cdot d_{\partial K}(x,y).
    \]
    That is, $K$ is a $\bigl(1-o(1)\bigr)\delta$-Lipschitz filling of $C_n$.
    Now, letting $\chi$ denote the Euler characteristic of the surface $M$ after filling in the missing disk, \Cref{triangleCount} allows us to bound
    \[
        \abs{T(K)}\geq \bigl(1-o(1)\bigr){\delta^3\over 4}(n-1)^2+1-2\chi=\bigl(1-o(1)\bigr){\delta^3\over 4} \cdot{\ell^2\over\epsilon^2},
    \]
    since $\chi$ depends only on $M$ and not on $k$.
    Now, all but $O(1/\epsilon)$ of the triangles in $T(K)$ are equilateral with side-length $\epsilon$ and so the surface area of $M$ is at least
    \[
        \biggl(\abs{T(K)}-O\biggl({1\over\epsilon}\biggr)\biggr)\cdot{\sqrt{3}\over 4}\epsilon^2\geq\bigl(1-o(1)\bigr){\sqrt{3}\over 16}\delta^3\ell^2-O(\epsilon).
    \]
    Taking the limit as $k\to\infty$ (and thus $\epsilon\to 0$) then yields the claim.
\end{proof}

\begin{corollary}
    Fix $0<\delta\leq 1$.
    If $M$ is any compact Riemannian surface which is a $\delta$-Lipschitz filling of the Riemannian circle of circumference $\ell$, then the surface area of $M$ is at least ${\sqrt{3}\over 16}\delta^3\ell^2$.
\end{corollary}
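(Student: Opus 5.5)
The plan is to reduce the corollary to \Cref{gromovForPL} by approximating the compact Riemannian surface $M$ with PL metric surfaces. The approximation must distort distances by a factor arbitrarily close to $1$ and change area by a factor arbitrarily close to $1$.

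Fix $\eta>0$. First I would take a smooth triangulation of $M$ that is fine enough that each triangle lies in a normal coordinate chart where the metric is $(1\pm\eta)$-close to Euclidean. I would also require that the boundary circle $\partial M$ is subdivided into short geodesic-like arcs. Next, replace each curved triangle by the Euclidean triangle with the same edge lengths, namely the Riemannian distances between its vertices. Gluing these gives a PL metric surface $M_\eta$. Piecing together the chart maps yields a homeomorphism $\Phi\colon M\to M_\eta$ that is $(1+O(\eta))$-bi-Lipschitz. This is the standard fact that a fine geodesic triangulation of a Riemannian manifold is nearly isometric to its Euclidean model; I would cite it rather than reprove it. Consequently $\operatorname{area}(M_\eta)\le(1+O(\eta))\operatorname{area}(M)$, and $d_{M_\eta}(\Phi x,\Phi y)\ge(1-O(\eta))\,d_M(x,y)$ for all $x,y$.

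Under this approximation the boundary of $M_\eta$ becomes a polygonal closed curve of length $\ell_\eta=(1\pm O(\eta))\ell$. Its intrinsic distance satisfies $d_{\partial M_\eta}\le(1+O(\eta))\,d_{\partial M}$. Therefore, for boundary points,
\[
d_{M_\eta}(\Phi x,\Phi y)\ge(1-O(\eta))\,\delta\, d_{\partial M}(x,y)\ge(1-O(\eta))\,\delta\, d_{\partial M_\eta}(\Phi x,\Phi y).
\]
So $M_\eta$ is a $(1-O(\eta))\delta$-Lipschitz filling of a closed curve of length $\ell_\eta$.

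\Cref{gromovForPL} is phrased for the Riemannian circle, but its proof only uses the boundary as a closed curve with its intrinsic length metric, and a polygonal closed curve is such a circle. Applying it gives
\[
\operatorname{area}(M)\ge(1-O(\eta))\tfrac{\sqrt3}{16}(1-O(\eta))^3\delta^3\ell_\eta^2 .
\]
Letting $\eta\to0$ yields the bound.

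I expect the main obstacle to be the bi-Lipschitz approximation step, in particular controlling distances globally rather than just within each triangle, and handling the boundary. For global distances, local $(1+\eta)$-bi-Lipschitz control suffices: lengths of curves are comparable, and length metrics are defined by infima of curve lengths. The boundary is the more delicate case: the edges of $M_\eta$ along $\partial M$ are straight chords of curved boundary arcs. Their total length differs from the arc length only by a factor $1+O(\eta)$ once the mesh is fine relative to the geodesic curvature of $\partial M$.
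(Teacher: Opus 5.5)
Your proposal is correct and matches the paper's proof. Both approximate $M$ by a PL metric surface through a $(1\pm O(\eta))$-bi-Lipschitz homeomorphism (the paper also just cites this as a known fact), transfer the area and the $\delta$-Lipschitz filling condition with a $(1-O(\eta))$ loss, apply \Cref{gromovForPL} to a boundary circle of length $(1\pm O(\eta))\ell$, and let $\eta\to 0$. Your remarks on the boundary chords and on globalizing the local distortion bound only spell out details the paper leaves inside its citation.
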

\begin{proof}
    Fix $\epsilon>0$.
    It is known\footnote{See, e.g.,~{\cite[Section 1]{cheeger_flat}}.} that one can build a PL metric surface $P_\epsilon$ such that there is a homeomorphism $f\colon M\to P_\epsilon$ satisfying
    \[
        (1-\epsilon)d_M(x,y)\leq d_P\bigl(f(x),f(y)\bigr)\leq(1+\epsilon)d_M(x,y)\qquad\text{for all }x,y\in M.
    \]
    Naturally, this means that the surface area of $M$ is at least $(1-\epsilon)^2$ times the surface area of $P_\epsilon$.
    Furthermore $P_\epsilon$ is a $(1-\epsilon)\delta$-filling of $\partial P_\epsilon$, which is the Riemannian circle of circumference at least $(1-2\epsilon)\ell$.
    We therefore apply \Cref{gromovForPL} to $P_\epsilon$ and take the limit as $\epsilon\to 0$ in order to establish the claim.
\end{proof}

\section{Conclusions}\label{conclusions}

It is incredibly unfortunate that we have been unable to resolve whether or not our discrete variant of Gromov's problem is actually equivalent to Gromov's problem.
We believe this to \emph{not} be the case, but have so-far been unable to find a construction separating the two.
Letting $D(n;\epsilon)$ denote the minimum number of vertices in a $(1-\epsilon)$-Lipschitz filling of $C_n$, define the limit
\[
    D^*\eqdef\liminf_{\epsilon\to 0^+}\liminf_{n\to\infty}{D(n;\epsilon)\over n^2}.
\]
As demonstrated in this paper, any compact Riemannian surface which isometrically fills the Riemannian circle of circumference $2\pi$ must have surface area at least $2\sqrt 3 \pi^2\cdot D^*$.
Since we know that there are isometric fillings of surface area $2\pi$ (namely, the hemisphere), we bound
\[
    D^*\leq {1\over\pi\sqrt{3}}\approx 0.1838.
\]
Without relying on balanced triangulations of the hemisphere, however, our best explicit constructions have shown only that
\[
    D^*\leq {3\over 16}=0.1875.
\]
That is to say, we have not devised any scheme by which to build interesting examples for our discrete problem that do not somehow invoke the continuous analogue.
Indeed, based solely on the work in this paper, it could still be possible that $D^*={1\over\pi\sqrt{3}}$ (which would verify Gromov's conjecture).
However, we believe strongly that this is not the case:

\begin{conj}
    $\displaystyle D^*<{1\over\pi\sqrt 3}$.
\end{conj}
That is to say, we believe that our discrete problem is a proper relaxation of Gromov's original problem and cannot be used to settle Gromov's conjecture.
\medskip

Next, recall that we proved in \Cref{discreteGromov} that
\[
    D^*\geq{1\over 8}=0.125.
\]
At this time, we have no evidence to suggest nor deny that this is the correct answer.
In fact, the authors have repeatedly changed their beliefs in this regard!
So, instead of conjecture a truth, we simply pose the question:
\begin{question}
    Is $\displaystyle D^*>{1\over 8}$?
\end{question}
We quickly remark that we would be happy with solely determining whether or not $\liminf_{n\to\infty}{D(n;0)\over n^2}>{1\over 8}$.
It seems reasonable to suspect that $\liminf_{n\to\infty}{D(n;\epsilon)\over n^2}$ is a continuous function of $\epsilon$ for $\epsilon\in[0,1]$,\footnote{And also that $\lim_{n\to\infty}{D(n;\epsilon)\over n^2}$ exists for every $\epsilon\in[0,1]$.} but we do not see an immediately obvious proof of this fact.

\bibliographystyle{abbrv}
\bibliography{references}

\end{document}